\providecommand{\U}[1]{\protect\rule{.1in}{.1in}}
\newtheorem{theorem}{Theorem}
\newtheorem{lemma}[theorem]{Lemma}
\newtheorem{remark}[theorem]{Remark}
\newenvironment{proof}[1][Proof]{\noindent\textbf{#1.} }{\ \rule{0.5em}{0.5em}}
\begin{document}

\title{Some noteworthy alternating trilinear forms}
\author{Jan Draisma\thanks{Jan Draisma is supported by a Vidi grant from the
Netherlands Organisation for Scientific Research (NWO)} ~and Ron Shaw}
\date{}
\maketitle

\begin{abstract}
Given an alternating trilinear form $T\in\operatorname{Alt}(\times^{3}V_{n})$
on $V_{n}=V(n,\mathbb{F})$ let $\mathcal{L}_{T}$ denote the set of
$T$-singular lines in $\operatorname*{PG}(n-1)=\mathbb{P}V_{n},$ consisting
that is of those lines $\langle a,b\rangle$ of $\operatorname*{PG}(n-1)$ such
that $T(a,b,x)=0$ for all $x\in V_{n}.$ Amongst the immense profusion of
different kinds of $T$ we single out a few which we deem noteworthy by virtue
of the special nature of their set $\mathcal{L}_{T}.$

\end{abstract}

\emph{\noindent Keywords:\quad trivector; alternating form; singular line;
division algebra; Desarguesian line-spread }

\emph{\noindent MSC 2010: 15A69, 17A35, 51E23}

\section{Introduction \label{Sec Intro}}

We will deal with a finite-dimensional vector space $V_{n}=V(n,\mathbb{F})$
and the associated projective space $\operatorname*{PG}(n-1,\mathbb{F}%
)=\mathbb{P}V_{n}.$ The $\tbinom{n}{2}$-dimensional space $\operatorname{Alt}%
(\times^{2}V_{n})$ consisting of alternating bilinear forms on $V_{n}$ is of
course very well understood. If $n=2m,$ or if $n=2m+1,$ then the nonzero
elements $B\in\operatorname{Alt}(\times^{2}V_{n})$ fall into $m$
$\operatorname*{GL}(n,\mathbb{F})$-orbits $\{\Omega_{k}\}_{k=1,2,...m},$ where
$\Omega_{k}$ consists of those $B$ which have rank $2k.$ For a given
$B\in\operatorname{Alt}(\times^{2}V_{n})$ a point $\langle a\rangle
\in\mathbb{P}V_{n}$ is said to be ($B$-)\emph{singular} whenever $B(a,x)=0$
holds for all $x\in V_{n}.$ Consequently if $n$ is odd then $B$-singular
points exist for any $B,$ while if $n=2m$ is even then only when $B$ is on the
orbit $\Omega_{m}$ do $B$-singular points not exist.

In the present paper we consider instead the $\tbinom{n}{3}$-dimensional space
$\operatorname{Alt}(\times^{3}V_{n})$ consisting of alternating trilinear
forms on $V_{n}$. In contrast with $\operatorname{Alt}(\times^{2}V_{n})$ the
mathematics of the space $\operatorname{Alt}(\times^{3}V_{n})$ is much more
complicated (and interesting!). In particular the orbit structure of
$\operatorname{Alt}(\times^{3}V_{n})$ is only known in certain low-dimensional
cases. Alternating trilinear forms have been classified in dimension $n\leq7$
over an arbitrary field, see \cite{Cohen88,Schouten31}, and also in dimension
$8$ over $\mathbb{C}$ and $\mathbb{R},$ see \cite{Djokovic83,Gurevich35}. Over
$\mathbb{C}$ there are 23 orbits in dimension $n=8,$ but in dimension $n=9$
the number of orbits is known to be infinite.

Over a finite field $\operatorname*{GF}(q)$ there are of course, in any finite
dimension $n,$ \textquotedblleft only\textquotedblright\ a finite number of
$\operatorname*{GL}(n,q)$-orbits. But in fact the number of orbits increases
extremely rapidly with increasing $n.$ To demonstrate this it will suffice to
use a crude upper bound on the order of the group $\operatorname*{GL}(n,q),$
namely $|\operatorname*{GL}(n,q)|\ll q^{n^{2}},$which holds on account of the
inclusion $\operatorname*{GL}(V_{n})\subset L(V_{n},V_{n}).$ Since $\wedge
^{3}V_{n,q}$ is of size $q^{n(n-1)(n-2)/6}$ it follows that $|\wedge
^{3}V_{n,q}|/|\operatorname*{GL}(n,q)|\gg q^{n(n^{2}-9n+2)/6}.$ In particular
for $n=10$ we have $|\wedge^{3}V_{10}|/|\operatorname*{GL}(10,q)|\gg q^{20},$
and so even on the ridiculous assumption that the stabilizer group of every
$T\in\wedge^{3}V_{10}$ is the whole of $\operatorname*{GL}(10,q)$ the number
of $\operatorname*{GL}(10,q)$-orbits would be substantially more than $q^{20}%
$. And for $n=20$ the number of $\operatorname*{GL}(20,q)$-orbits in
$\wedge^{3}V_{20}$ is much more than $q^{740}.$

The violence of the combinatorial explosion which takes place for $n>8$ is
really quite startling! This occurs even over the smallest fields. For on
setting $N(n,q)=|\wedge^{3}V_{n,q}|/|\operatorname*{GL}(n,q)|$ we find for
$q=2$ the following approximate values
\[%
\begin{tabular}
[c]{|clllllll|}\hline
$n=$ & 5 & 6 & 7 & 8 & 9 & 10 & 11\\
$N(n,2)$ & $0.00010$ & $0.000053$ & $0.00021$ & $0.0135$ & $27.6$ &
$3.6\times10^{6}$ & $6.1\times10^{13}$\\\hline
\end{tabular}
\ .
\]

Faced with this great multitude of orbits for alternating trilinear forms one
naturally hopes that there are a few orbits which are singled out by having
some special property and which thus deserve further attention. In the case of
alternating bilinear forms the outstanding $\operatorname*{GL}(n,\mathbb{F}%
)$-orbit occurs of course in even dimension $n=2m$ and consists of those
$B\in\operatorname{Alt}(\times^{2}V_{n})$ which have no singular points, the
stabilizer groups being $\cong\operatorname{Sp}(2m,\mathbb{F}).$ Now in the
case of $T\in\operatorname{Alt}(\times^{3}V_{n})$ one may define a point
$\langle a\rangle\in\mathbb{P}V_{n}$ to be $T$\emph{-singular} whenever
$T(a,x,y)=0$ holds for all $x,y\in V_{n}.$ Also one may define a subspace
$\operatorname*{rad}T$ of $V_{n}$ by%
\begin{equation}
\operatorname*{rad}T=(a\in V_{n}:T(a,x,y)=0\quad\text{for all }x,y\in V_{n}\}
\label{radT}%
\end{equation}
and call $T$ \emph{non-degenerate} whenever $\operatorname*{rad}T=\{0\}.$ But,
just as in the bilinear case, there is not much interest in degenerate $T,$
since one naturally switches one's attention to the non-degenerate trilinear
form induced in the lower-dimensional quotient space $V_{n}%
/\operatorname*{rad}T.$ However of crucial importance in the case of
$T\in\operatorname{Alt}(\times^{3}V_{n})$ are those projective lines $\langle
a,b\rangle$ in $\operatorname*{PG}(n-1)=\mathbb{P}V_{n}$ which are
$T$\emph{-singular}, satisfying that is
\begin{equation}
T(a,b,x)=0\quad\text{for all }x\in V_{n}. \label{T-singular line}%
\end{equation}
For a given $T\in\operatorname{Alt}(\times^{3}V_{n})$ we will denote by
$\mathcal{L}_{T}$ the set consisting of all the $T$-singular lines in
$\operatorname*{PG}(n-1,\mathbb{F})=\mathbb{P}V_{n}$.

\begin{remark}
The space $\operatorname{Alt}(\times^{3}V_{n})$ of alternating 3-forms is
naturally isomorphic to the space $\wedge^{3}V_{n}^{\ast}$ of dual trivectors,
and sometimes statements concerning an element $T\in\operatorname{Alt}%
(\times^{3}V_{n})$ will be phrased in terms of its isomorphic image
$t\in\wedge^{3}V_{n}^{\ast}.$ If $\{f_{i}\}_{1\leq i\leq n}$ is the basis for
$V_{n}^{\ast}$ dual to the basis $\{e_{i}\}_{1\leq i\leq n}$ for $V_{n}$ then,
on writing $f_{ijk}:=f_{i}\wedge f_{j}\wedge f_{k},$ we have%
\begin{equation}
t=%
{\textstyle\sum\nolimits_{1\leq i<j<k\leq n}}
c_{ijk}f_{ijk},\quad\text{where }c_{ijk}:=T(e_{i},e_{j},e_{k}).
\label{Iso t with T}%
\end{equation}
Equivalently expressed, each $t\in\wedge^{3}V_{n}^{\ast}$ gives rise to an
element $T\in$ $\operatorname{Alt}(\times^{3}V_{n})$ by way of%
\begin{equation}
T(x,y,z)=<t|x\wedge y\wedge z>, \label{Iso T with t}%
\end{equation}
where $<\cdot|\cdot>$ is the standard determinantal pairing of $\wedge
^{3}V_{n}^{\ast}$ with $\wedge^{3}V_{n}$ given by $<f_{1}\wedge f_{2}\wedge
f_{3}|v_{1}\wedge v_{2}\wedge v_{3}>=\det[f_{i}(v_{j})].$
\end{remark}

\section{Alternating 3-forms having no singular lines?
\label{Sec no singular lines}}

A question immediately arises: \emph{at least for some }$(n,\mathbb{F}%
),$\emph{ does there exist }$T\in\operatorname{Alt}(\times^{3}V_{n})$\emph{
such that }$\mathcal{L}_{T}$\emph{ is empty? }

Well, for any field $\mathbb{F},$ certainly not in even dimension $n=2m.$ For
given $T\in\operatorname{Alt}(\times^{3}V_{2m})$ choose any direct sum
decomposition $V_{2m}=\prec a\succ\oplus V_{2m-1}$ and consider the element
$B_{a}\in\operatorname{Alt}(\times^{2}V_{2m-1})$ defined by $B_{a}%
(x,y)=T(a,x,y).$ Since the dimension of $V_{2m-1}$ is odd there exists at
least one $B_{a}$-singular point $\langle b\rangle,$ whence $\langle
a,b\rangle$ is a $T$-singular line. It follows that \emph{through each point
}$\langle a\rangle\in\mathbb{P}V_{2m}$ \emph{there passes at least one }%
$T$\emph{-singular line. }

Concerning odd dimension $n,$ certainly a non-zero $T\in\operatorname{Alt}%
(\times^{3}V_{n})$ has no $T$-singular lines in the special case $n=3.$ But
\emph{for} $n>3$ \emph{if }$\mathbb{F}$\emph{ is quasi-algebraically closed
then it is known that }$T$\emph{-singular lines always exist: }see\emph{
\cite[Theorem 1.1]{DraismaShaw}. }

An important field not covered in this last statement is the real field
$\mathbb{R}.$ And in the case of a real $7$-dimensional space we have an
affirmative answer to our query: \emph{if }$V_{7}=V(7,\mathbb{R})$ \emph{there
exists }$T\in\operatorname{Alt}(\times^{3}V_{7})$ such that $\mathcal{L}_{T}%
$\emph{ is empty. }To see this, recall that in a real 7-dimensional Euclidean
space $V_{7}$ there exist, see \cite{BrownGray67}, bilinear vector cross
products $a\times b$ which satisfy \emph{(i)} $a\times b.a=0=a\times b.b$ and
\emph{(ii)} $a\times b.a\times b=(a.a)(b.b)-(a.b)^{2}.$ Then upon defining
$T(a,b,c)=a\times b,c$ it follows that $T\in\operatorname{Alt}(\times^{3}%
V_{7});$ moreover, since, by \emph{(ii), }$a\times b\neq0$ for linearly
independent $a,b,$ we see that $\mathcal{L}_{T}$ is empty. One such $T$ has
for its isomorphic image the dual trivector $t$ given by
\begin{equation}
t=f_{124}+f_{235}+f_{346}+f_{457}+f_{561}+f_{672}+f_{713}, \label{t_Fano}%
\end{equation}
where the presence of $+f_{ijk}$ in this expression for $t$ goes along with
the cross product relation $e_{i}\times e_{j}=+e_{k}.$

As is well known, the existence in real seven dimensions of these vector cross
products is related to the exceptional existence of the real division algebra
of the (non-split) octonions. The next theorem shows that the real dimension
$n=7$ is also exceptional since $T$-singular lines always exist for
$T\in\operatorname{Alt}(\times^{3}V_{n})$ in any other real dimension $n>3.$

\begin{theorem}
\label{Thm Except n = 3, 7}Except when $n\in\{3,7\}$ every alternating
trilinear form on a real vector space $V_{n},n>2,$ possesses singular lines.
\end{theorem}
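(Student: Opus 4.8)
The plan is to separate the claim into the easy even-dimensional case, the small odd cases that are handled by results already quoted, and the genuinely new content, which is showing that $T$-singular lines exist for every $T\in\operatorname{Alt}(\times^{3}V_{n})$ with $n$ odd, $n\geq 9$ (the cases $n=3,7$ being the exceptions, and $n=5$ being covered below). For even $n=2m$ nothing needs to be done: the argument already given in Section~\ref{Sec no singular lines}, slicing by a point $\langle a\rangle$ and using that $B_a\in\operatorname{Alt}(\times^{2}V_{2m-1})$ has a singular point because $2m-1$ is odd, shows a $T$-singular line passes through every point. So from now on $n$ is odd.

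The key device is the same slicing map. Fix $T$ and consider, for $\langle a\rangle\in\mathbb{P}V_n$, the alternating bilinear form $B_a(x,y)=T(a,x,y)$ on $V_n$; it always has $a$ in its radical, so its rank is an even number $2r(a)\leq n-1$. A $T$-singular line through $\langle a\rangle$ exists precisely when $r(a)<(n-1)/2$, i.e.\ when $B_a$ is not of maximal possible rank $n-1$ on a complement of $a$. Thus $\mathcal{L}_T=\emptyset$ would force $r(a)=(n-1)/2$ for \emph{every} $a$, i.e.\ the induced form on $V_n/\langle a\rangle$ is nondegenerate symplectic for every choice of $a$. I would now exploit this rigidity. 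First I would record that such a $T$ is automatically nondegenerate, and that the map $a\mapsto B_a$ together with the symplectic structures it produces gives $V_n/\langle a\rangle$ the structure needed to run a vector-cross-product argument: define a bilinear map on a complement by inverting the nondegenerate $B_a$ against the trilinear form, and show (as in the Brown--Gray setup quoted above for $n=7$) that this equips a space of dimension $n-1$ with a bilinear cross product having $a\times b\neq 0$ for independent $a,b$. Over $\mathbb{R}$, by the Bott--Milnor / Kervaire / Hurwitz-type classification of real vector cross products, such a structure can exist only in dimensions forcing $n-1\in\{1,3,7\}$, hence $n\in\{2,4,8\}$ for two-fold cross products — but the parity argument has already excluded even $n$, and the genuine alternating-trilinear obstruction is the octonionic one living in $n=7$. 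The remaining odd value $n=5$ I would dispose of directly, either by invoking the classification of alternating $3$-forms in dimension $\leq 7$ over an arbitrary field (\cite{Cohen88,Schouten31}) and checking each real $5$-dimensional orbit has a singular line, or by a short rank count: in $V_5$ one has $B_a$ of rank $\leq 4$ with $a$ in the radical, and a parity-plus-dimension argument shows a common isotropic line must appear.

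I expect the main obstacle to be making the ``vector cross product'' extraction rigorous and field-independent enough that the real classification theorems apply cleanly — in particular, pinning down exactly which bilinearity and non-vanishing properties one really gets from the hypothesis ``$B_a$ nondegenerate on $V_n/\langle a\rangle$ for all $a$'', and checking these match the hypotheses of the Brown--Gray characterization rather than merely resembling them. A secondary subtlety is that the quoted result \cite[Theorem 1.1]{DraismaShaw} handles quasi-algebraically closed fields but not $\mathbb{R}$, so the reduction cannot simply be ``base-change and quote''; the argument must stay over $\mathbb{R}$ throughout and genuinely use positive-definiteness or at least the real classification of composition-type algebras. Once the cross-product structure is in hand, the conclusion that $n-1$ must be $1,3,7$ and hence (after removing even $n$ and $n=3$) only $n=7$ survives is immediate, and the theorem follows. $\blacksquare$
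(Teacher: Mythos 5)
There is a genuine gap in your treatment of the odd-dimensional case, and it shows up already in your own dimension count. You propose to extract, from the hypothesis that $B_a$ is nondegenerate on a complement of $\langle a\rangle$ for every $a$, a bilinear cross product on a space of dimension $n-1$, and to conclude $n-1\in\{1,3,7\}$, i.e.\ $n\in\{2,4,8\}$. These are all even, so your argument (if it worked) would show that \emph{no} odd $n$ carries a form without singular lines --- including $n=7$, contradicting the octonionic example $t=f_{124}+\cdots+f_{713}$ exhibited just before the theorem. Hence the structure you are trying to build cannot live in dimension $n-1$. Moreover the construction itself does not get off the ground: ``inverting $B_a$ against $T$'' on $V_n/\langle a\rangle$ is not well defined (replacing $x$ by $x+\lambda a$ changes $T(x,y,z)$ by $\lambda B_a(y,z)$), it depends on the choice of $a$, and even the nonvanishing you need fails, since $T(x,y,z)$ could vanish for all $z$ in the chosen complement without $\langle x,y\rangle$ being $T$-singular. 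Finally, the Brown--Gray classification of vector cross products requires the norm identity $x\times y.x\times y=(x.x)(y.y)-(x.y)^2$ with respect to a positive definite form, which a symplectic form on a quotient cannot supply. You correctly flag this mismatch as the main obstacle, but it is not a technical wrinkle to be smoothed over; the cross-product classification is the wrong tool here.

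The paper's proof avoids all of this and needs no parity split and no separate treatment of $n=5$. Choose a positive definite scalar product on $V_n$ itself and define $x\times y\in V_n$ by $x\times y.z=T(x,y,z)$ for all $z\in V_n$. Then $x\times y=-y\times x$ is perpendicular to both $x$ and $y$, and the hypothesis $\mathcal{L}_T=\emptyset$ says precisely that $x\times y\neq 0$ whenever $x,y$ are independent; no norm identity is needed or claimed. Now make $\mathcal{A}=\mathbb{R}\oplus V_n$ into an algebra with identity $1$ and product $xy=-x.y+x\times y$ for $x,y\in V_n$. A short computation using only perpendicularity and nonvanishing shows that $\mathcal{A}$ has no zero divisors, hence is a real division algebra of dimension $n+1$ \cite{Schafer}, forcing $n+1\in\{1,2,4,8\}$ by \cite{BottMilnor,Kervaire} and therefore $n\in\{3,7\}$ once $n>2$. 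The point you were missing is that the relevant classification is of division algebras in dimension $n+1$ (equivalently, of ``weak'' cross products on $V_n$ itself), not of Brown--Gray cross products in dimension $n-1$.
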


\begin{proof}
Suppose that $T\in\operatorname{Alt}(\times^{3}V_{n})$ is such that
$\mathcal{L}_{T}$ is empty. Set $V_{n+1}=\mathbb{R}\oplus V_{n},$ and equip
$V_{n}$ with $\operatorname{O}(n)$-geometry by making a(ny) choice $x.y$ of a
positive definite scalar product on $V_{n}$. Extend this to a positive
definite scalar product $a.b$ on $V_{n+1}$ by defining
\begin{equation}
(\alpha,x).(\beta,y)=\alpha\beta+x.y~,\quad\alpha,\beta\in\mathbb{R},~x,y\in
V_{n}. \label{a.b}%
\end{equation}
Make $V_{n}$ into a real algebra by defining the algebra product of $x$ and
$y$ to be that element $x\times y\in V_{n}$ such that%
\begin{equation}
x\times y.z=T(x,y,z),\qquad\text{for all }z\in V_{n}. \label{x cross y}%
\end{equation}
Since $T$ is alternating it follows that
\begin{equation}
x\times y=-y\times x\in\prec x,y\succ^{\perp}. \label{property 1}%
\end{equation}
Further, since we are supposing that there are no $T$-singular lines, if $x$
and $y$ are linearly independent then%
\begin{equation}
x\times y\text{ is a \emph{nonzero }element of }V_{n}\text{ which is
perpendicular to the plane}\prec x,y\succ. \label{property 2}%
\end{equation}
We now make $V_{n+1}$ into a real algebra $\mathcal{A}$ by laying down that
$1\in\mathbb{R}$ is an identity element and that the $\mathcal{A}$-product of
$x,y\in V_{n}$ is%
\begin{equation}
xy=-x.y+x\times y,\qquad x,y\in V_{n}. \label{xy}%
\end{equation}
It follows from (\ref{property 1})-(\ref{xy}) that the algebra $\mathcal{A}$
has no zero divisors. Consequently, see \cite[Section II.2]{Schafer},
$\mathcal{A}$ is a division algebra over $\mathbb{R}.$ The theorem now follows
since real division algebras exist only in dimensions $1,2,4,8$: see
\cite{BottMilnor}, \cite{Kervaire}.
\end{proof}

\begin{remark}
\label{Rmk Another proof of no singular lines}In \cite[Theorem 3.2]%
{DraismaShaw} it was proved that, over any field $\mathbb{F},$ the union of
all lines $L\in\mathcal{L}_{T},~T\in\operatorname{Alt}(\times^{3}V_{2k+1}),$
is either the whole of $V_{2k+1}$ or is a hypersurface in $\operatorname*{PG}%
(2k)$ with equation $f_{T}(x)=0,~x\in V_{2k+1},$ where $f_{T}$ is a certain
homogeneous polynomial of degree $k-1$. In the case of a space $V_{7}$ the
polynomial $f_{T}$ has degree 2, and if $T$ has trivector $t$ as in
(\ref{t_Fano}) then (up to an overall sign) one finds that
\begin{equation}
f_{T}(x)=\sum\nolimits_{i=1}^{7}(x_{i})^{2}. \label{F_T = squares}%
\end{equation}
In the case $\mathbb{F}=\mathbb{R}$ it follows that $f_{T}(x)\neq0$ for all
$x\neq0.$ Thus we have obtained another proof that $\mathcal{L}_{T}$ is empty
if $t$ is as in (\ref{t_Fano}) and $V_{7}=V(7,\mathbb{R}).$
\end{remark}

\begin{remark}
\label{Rmk t_Fano for GF(q)}It is of some interest to consider $t$ as in
(\ref{t_Fano}) in the cases when $V_{7}=V(7,q).$

\emph{(i)} First suppose that $q=2^{h}$ is even. In which case $f_{T}%
(x)=(\sum\nolimits_{i=1}^{7}x_{i})^{2},$ whence the union of all the
$T$-singular lines is the hyperplane with equation $\sum\nolimits_{i=1}%
^{7}x_{i}=0.$

\emph{(ii)} If $q$ is odd then the union of all the $T$-singular lines is the
parabolic quadric $\mathcal{P7}_{6}$ in $\operatorname*{PG}(6,2)$ having
equation $\sum\nolimits_{i=1}^{7}(x_{i})^{2}=0.$

That the cases of even $q$ and odd $q$ are quite different is highlighted by
the fact that the alternating trilinear form given by $t=f_{124}%
+f_{235}+f_{346}+f_{457}+f_{561}+f_{672}+f_{713}$ can be seen to belong to the
same $\operatorname*{GL}(7,q)$-orbit as $f_{6}$ in \cite[Table 1]{Cohen88} if
$q$ is even, but to belong to the same $\operatorname*{GL}(7,q)$-orbit as
$f_{9}$ in \cite[Table 1]{Cohen88} if $q$ is odd. In particular, upon using
Mathematica to compute the quadratic form $f_{T}$ for all orbit
representatives in \cite[Table 1]{Cohen88}, we found that only $f_{9}$ gives
rise to a nonsingular quadratic form.
\end{remark}

\section{Alternating 3-forms yielding spreads in $\mathbb{P}V_{2m}?$
\label{Sec L_T a spread}}

\vspace{-0.2in}

As noted in the preceding section, \emph{if }$T$ \emph{is any alternating
3-form on an even-dimensional space }$V_{2m},$ \emph{over any field}
$\mathbb{F},$\emph{ then through each point }$\langle a\rangle\in
\mathbb{P}V_{2m}$ \emph{there passes at least one }$T$\emph{-singular line.}
In our attempt to find alternating 3-forms $T$ whose set $\mathcal{L}_{T}$ of
singular lines is in some manner special, perhaps there exists $T\in
\operatorname{Alt}(\times^{3}V_{2m})$ such that \emph{through each point
}$\langle a\rangle\in\mathbb{P}V_{2m}$ \emph{there passes precisely one }%
$T$\emph{-singular line? }That is, for some $T\in\operatorname{Alt}(\times
^{3}V_{2m}),$ \emph{perhaps }$\mathcal{L}_{T}$\emph{ is a spread of lines in
}$\mathbb{P}V_{2m}$? Certainly, in finite geometry circles, line-spreads in
$\operatorname*{PG}(2m-1,q)$ are of continuing interest. They also exist in
great profusion, even in low dimension; in particular in $\operatorname*{PG}%
(5,2)$ there exist, see \cite{MatevaTopalova}, 131,044 inequivalent line-spreads!

In the present section we will show that in the case of $V_{6}=V(6,q)$ there
exists $T\in\operatorname{Alt}(\times^{3}V_{6})$ such that $\mathcal{L}_{T}$
is a line-spread in $\operatorname*{PG}(5,q).$ To this end consider a space
$V(3,q^{2})$ with basis $\prec e_{1},e_{2},e_{3}\succ.$ Choose any element
$\rho\in\operatorname*{GF}(q^{2})\setminus\operatorname*{GF}(q)$ and define%
\begin{equation}
e_{4}=\rho e_{1},\quad e_{5}=\rho e_{2},\quad e_{6}=\rho e_{3}. \label{e4e5e6}%
\end{equation}
Then we may view $V(3,q^{2})$ as a 6-dimensional vector space over
$\operatorname*{GF}(q)$:
\begin{equation}
V_{6}=V(6,q)=\prec e_{1},e_{2},e_{3},e_{4},e_{5},e_{6}\succ. \label{V3 as V6}%
\end{equation}
The $q^{4}+q^{2}+1$ points $\langle a\rangle$ of $\operatorname*{PG}%
(2,q^{2})=\mathbb{P}(V(3,q^{2}))$ give rise over $\operatorname*{GF}(q)$ to a
Desarguesian spread $\mathcal{L}$ of $q^{4}+q^{2}+1$ lines $\langle a,\rho
a\rangle$ in $\operatorname*{PG}(5,q)=\mathbb{P}(V(6,q)).$ We aim to show that
$\mathcal{L=L}_{T}$ for some $T\in\operatorname{Alt}(\times^{3}V_{6}).$

If $\tau$ is any element of the 1-dimensional $\operatorname*{GF}(q^{2}%
)$-space $\operatorname{Alt}(\times^{3}V(3,q^{2}))$ then we may define an
element $T\in\operatorname{Alt}(\times^{3}V_{6})$ by%
\begin{equation}
T(x,y,z)=\operatorname*{Tr}(\tau(x,y,z)). \label{T as Tr}%
\end{equation}
Here we use $\operatorname*{Tr}$ to denote the trace $\operatorname*{Tr}%
_{\operatorname*{GF}(q^{2})/\operatorname*{GF}(q)}$ over the subfield
$\operatorname*{GF}(q)$ defined, see \cite[Section 2.3]{LidlNiederreiter}, by%
\begin{equation}
\operatorname*{Tr}(\beta)=\beta+\beta^{q},\quad\beta\in\operatorname*{GF}%
(q^{2}). \label{Defn of Tr}%
\end{equation}
(So $\operatorname*{Tr}$ here is not the absolute trace over the prime
subfield $\operatorname*{GF}(p)$ except if $q=p.)$ Thus defined,
$\operatorname*{Tr}$ is a $\operatorname*{GF}(q)$-linear mapping
$\operatorname*{GF}(q^{2})\rightarrow\operatorname*{GF}(q).$ whence $T$ is
indeed an element of $\operatorname{Alt}(\times^{3}V_{6}).$

Let us fix $\tau$ by requiring $\tau(e_{1},e_{2},e_{3})=\beta$ for some choice
of nonzero element $\beta\in\operatorname*{GF}(q^{2}).$ Upon defining
$c_{i}\in\operatorname*{GF}(q^{2})$ by
\begin{equation}
c_{0}=\operatorname*{Tr}(\beta),~c_{1}=\operatorname*{Tr}(\beta\rho
),~c_{2}=\operatorname*{Tr}(\beta\rho^{2}),~c_{3}=\operatorname*{Tr}(\beta
\rho^{3}) \label{ci = Tr}%
\end{equation}
it follows from (\ref{e4e5e6}), (\ref{T as Tr}) that $t$ in
(\ref{Iso t with T}) is given by%
\begin{equation}
c_{123}=c_{0},~c_{234}=-c_{135}=c_{126}=c_{1},~c_{156}=-c_{246}=c_{345}%
=c_{2},~c_{456}=c_{3}, \label{cijk = c}%
\end{equation}
with $c_{ijk}=0$ for other $i<j<k.$ That is
\begin{equation}
t=c_{0}t_{0}+c_{1}t_{1}+c_{2}t_{2}+c_{3}t_{3}, \label{t = c0t0 + ...}%
\end{equation}
where the trivectors $t_{0},t_{1},t_{2},t_{3}$ are defined by%
\begin{equation}
t_{0}=f_{123},~t_{1}=f_{234}-f_{135}+f_{126},~t_{2}=f_{156}-f_{246}%
+f_{345},~t_{3}=f_{456}. \label{t0 = ,t1 = ...}%
\end{equation}

\medskip

The multiplicative group $\operatorname*{GF}(q^{2})^{\times}$ of the field
$\operatorname*{GF}(q^{2})$ is a cyclic group $\langle\zeta\rangle$ generated
by an irreducible element $\zeta\in\operatorname*{GF}(q^{2})$ of order
$q^{2}-1$: $\zeta^{(q-1)(q+1)}=1.$ The multiplicative group
$\operatorname*{GF}(q)^{\times}$ of the subfield $\operatorname*{GF}(q)$ is
the cyclic group $\langle\xi\rangle$ generated by $\xi=\zeta^{q+1},$ of order
$q-1.$ In making a specific choice of the field elements $\beta,\rho$ in
(\ref{ci = Tr}), it will help to consider separately the cases of odd $q$ and
even $q.$

\subsection{The case of odd $q$ \label{SubSec odd q}}

Suppose that $q=2k+1$ is odd. Then
\begin{equation}
\operatorname*{GF}(q^{2})^{\times}=\langle\zeta\rangle,\quad\text{ where}%
\quad\text{ }\zeta^{4k(k+1)}=1,\quad\zeta^{2k(k+1)}=-1. \label{powers of zeta}%
\end{equation}
In (\ref{e4e5e6}) let us make the following choice of $\rho\notin
\operatorname*{GF}(q)$:%
\begin{equation}
\rho=\zeta^{k+1},\quad\text{ and so}\quad\text{ }\rho^{4k}=1,\quad\rho
^{2k}=-1. \label{choice of rho}%
\end{equation}
It follows that%
\begin{equation}
\operatorname*{Tr}(\rho)=\rho(1+\rho^{2k})=0,\quad\operatorname*{Tr}(\rho
^{2})=\rho^{2}(1+\rho^{4k})=2\rho^{2},\quad\operatorname*{Tr}(\rho^{3}%
)=\rho^{3}(1+\rho^{6k})=0. \label{Tr of powers of rho}%
\end{equation}
Since $\operatorname*{Tr}(1)=2,$ if we make the choice $\beta=\frac{1}{2}$
then the trivector $t$ in (\ref{t = c0t0 + ...}) is%

\begin{equation}
t=f_{123}+\mu(f_{156}-f_{246}+f_{345}),\quad\text{ where}\quad\mu=\rho^{2}.
\label{t = f123 + mu...}%
\end{equation}
Observe that $\mu$ is the square of an element $\rho$ of $\operatorname*{GF}%
(q^{2}),$ but \emph{that }$\mu$\emph{ is an element of }$\operatorname*{GF}%
(q)$\emph{ which is one of the non-squares in }$\operatorname*{GF}(q).$ By
making different choices of the irreducible element $\zeta$ in the definition
(\ref{choice of rho}) of $\rho$ we can arrange for $\mu$ in
(\ref{t = f123 + mu...}) to be any of the non-square elements in
$\operatorname*{GF}(q).$

\begin{theorem}
\label{Thm Line-spread q odd}If $V_{6}=V(6,q)$ where $q$ is odd, consider the
3-form $T\in\operatorname{Alt}(\times^{3}V_{6})$ given by the dual trivector%
\[
t=f_{123}+\mu(f_{156}-f_{246}+f_{345}).
\]
Then, provided only that $\mu\in\operatorname*{GF}(q)$ is chosen to be a
non-square, $\mathcal{L}_{T}$ is a Desarguesian line-spread in
$\operatorname*{PG}(5,q).$
\end{theorem}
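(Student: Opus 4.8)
The plan is to recognise the form $t$ of the theorem as arising, via the construction of Section~\ref{Sec L_T a spread}, from the $\operatorname*{GF}(q^{2})$-structure on $V_{6}$ attached to a square root of $\mu$, and then to show by a short trace-duality argument that $\mathcal{L}_{T}$ is exactly the associated Desarguesian line-spread $\mathcal{L}$. First I would recover that field structure. Since $q$ is odd we have $\mu^{(q^{2}-1)/2}=(\mu^{q-1})^{(q+1)/2}=1$, so the non-square $\mu\in\operatorname*{GF}(q)$ is a square in $\operatorname*{GF}(q^{2})$; pick $\rho\in\operatorname*{GF}(q^{2})$ with $\rho^{2}=\mu$. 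Then $\rho\notin\operatorname*{GF}(q)$ (otherwise $\mu$ would be a square in $\operatorname*{GF}(q)$), so $\{1,\rho\}$ is a $\operatorname*{GF}(q)$-basis of $\operatorname*{GF}(q^{2})$ and, just as in (\ref{e4e5e6})--(\ref{V3 as V6}), setting $e_{4}=\rho e_{1},e_{5}=\rho e_{2},e_{6}=\rho e_{3}$ exhibits $V_{6}$ as the $\operatorname*{GF}(q)$-space underlying $V(3,q^{2})=\prec e_{1},e_{2},e_{3}\succ$; this $\rho$ is the one of (\ref{choice of rho}), up to sign. From $\rho^{2}=\mu\in\operatorname*{GF}(q)$ together with $\rho\notin\operatorname*{GF}(q)$ one gets $\rho^{q}=-\rho$, hence $\operatorname*{Tr}(\rho)=\operatorname*{Tr}(\rho^{3})=0$ and $\operatorname*{Tr}(\rho^{2})=2\mu$; so taking $\beta=\frac{1}{2}$ in (\ref{ci = Tr}) gives $c_{0}=1$, $c_{1}=c_{3}=0$, $c_{2}=\mu$, and by (\ref{cijk = c})--(\ref{t = c0t0 + ...}) the $T$ of the theorem is precisely $T(x,y,z)=\operatorname*{Tr}(\tau(x,y,z))$, where $\tau$ is the (essentially unique) nonzero alternating $3$-form on $V(3,q^{2})$ normalised by $\tau(e_{1},e_{2},e_{3})=\frac{1}{2}$.

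The key step is then to show that a line $\langle a,b\rangle$ of $\operatorname*{PG}(5,q)$ is $T$-singular if and only if $a$ and $b$ are $\operatorname*{GF}(q^{2})$-linearly dependent. For fixed $a,b$ the map $\ell\colon x\mapsto\tau(a,b,x)$ is a $\operatorname*{GF}(q^{2})$-linear functional $V(3,q^{2})\to\operatorname*{GF}(q^{2})$, and $T(a,b,\cdot)=\operatorname*{Tr}\circ\ell$. If $a,b$ are $\operatorname*{GF}(q^{2})$-dependent then, $\tau$ being a scalar multiple of the $3\times3$ determinant, $\ell=0$, so $\langle a,b\rangle$ is $T$-singular. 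Conversely, if $\ell\neq0$ then, being a nonzero $\operatorname*{GF}(q^{2})$-linear map into the one-dimensional $\operatorname*{GF}(q^{2})$-space $\operatorname*{GF}(q^{2})$, it is surjective, whence $(\operatorname*{Tr}\circ\ell)(V_{6})=\operatorname*{Tr}(\operatorname*{GF}(q^{2}))=\operatorname*{GF}(q)$; in particular $T(a,b,\cdot)\neq0$, so $\langle a,b\rangle$ is not $T$-singular. This establishes the stated equivalence.

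Finally, a line $\langle a,b\rangle$ of $\operatorname*{PG}(5,q)$ — so $a,b$ are $\operatorname*{GF}(q)$-independent — for which $a,b$ are in addition $\operatorname*{GF}(q^{2})$-dependent is exactly a $2$-dimensional $\operatorname*{GF}(q)$-subspace contained in, hence equal to, the one-dimensional $\operatorname*{GF}(q^{2})$-subspace $\operatorname*{GF}(q^{2})a=\langle a,\rho a\rangle$. Thus $\mathcal{L}_{T}$ consists of precisely the $q^{4}+q^{2}+1$ lines of the Desarguesian spread $\mathcal{L}$ of Section~\ref{Sec L_T a spread}, and the theorem follows.

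I do not anticipate a serious obstacle: the two ingredients — that a nonzero alternating $3$-form on a $3$-space is, up to scalar, the determinant, and that $\operatorname*{Tr}_{\operatorname*{GF}(q^{2})/\operatorname*{GF}(q)}$ is surjective — are elementary, and the reduction of the theorem's normal form to the trace construction is just the computation (\ref{ci = Tr})--(\ref{t = f123 + mu...}) read backwards. The one place that deserves care is the hypothesis that $\mu$ be a non-square: it is exactly what makes $\operatorname*{GF}(q)(\rho)=\operatorname*{GF}(q^{2})$ — so that the $6$-dimensional picture is the right one and $\mathcal{L}$ genuinely is a line-spread of $\operatorname*{PG}(5,q)$ rather than degenerating — and it is also what forces $\rho^{q}=-\rho$, which is what kills the odd-power traces and yields the stated trivector $t$.
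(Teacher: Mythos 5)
Your proof is correct and takes essentially the same route as the paper: both realise the given $t$ as $T=\operatorname{Tr}\circ\tau$ for the $\operatorname{GF}(q^{2})$-structure on $V_{6}$ determined by a $\rho$ with $\rho^{2}=\mu$ (the paper does this in the discussion preceding the theorem), identify the spread lines $\langle a,\rho a\rangle$ as singular since $\tau$ alternates over $\operatorname{GF}(q^{2})$, and show the remaining lines are non-singular because $\tau(a,b,\cdot)$ takes a nonzero value whose trace is nonzero. The only cosmetic difference is in that last step: the paper uses $\operatorname{SL}(3,q^{2})$-transitivity to reduce to the single evaluation $\operatorname{Tr}(\tau(e_{1},e_{2},e_{3}))=\operatorname{Tr}(\beta)\neq0$, whereas you compose a nonzero (hence surjective) $\operatorname{GF}(q^{2})$-linear functional with the surjective trace map; your explicit reverse-engineering of $\rho$ from the non-square $\mu$ is also a slightly more direct way of seeing that every non-square is covered.
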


\begin{proof}
Because in (\ref{T as Tr}) we have $\tau(a,\rho a,z)=0$ for all $z,$ the
$q^{4}+q^{2}+1$ lines $\langle a,\rho a\rangle$ in $\operatorname*{PG}(5,q)$
are certainly all $T$-singular. To complete the proof we need to show that no
other lines $\langle a,b\rangle$ in $\operatorname*{PG}(5,q)$ are
$T$-singular. Suppose to the contrary that $b\notin\langle a,\rho a\rangle$
yet $T(a,b,x)=0$ for all $x\in V_{6}.$ Now $\tau(Aa,Ab,Ax)=\tau(a,b,x)$ for
any $A\in\operatorname{SL}(3,q^{2}).$ So if $b\notin\langle a,\rho a\rangle$
we can choose $A\in\operatorname{SL}(3,q^{2})$ such that $Aa=e_{1},~Ab=e_{2},$
Since $\operatorname*{Tr}(\tau(e_{1},e_{2},x))\neq0$ if $x=e_{3}$ it follows
that $T(a,b,x)\neq0$ for all $x\in V_{6}.$
\end{proof}

\subsection{The case of even $q$ \label{SubSec even q}}

If $q=2^{h}$ then every element $\mu\in\operatorname*{GF}(q)$ is a square and
for no choice of $\mu$ in (\ref{t = f123 + mu...}) is $\mathcal{L}_{T}$ a
spread. For example, if $\mu=1$ then every line in the plane $\langle
e_{1}+e_{4},e_{2}+e_{5},e_{3}+e_{6}\rangle$ is $T$-singular. However, as we
now demonstrate, $\mathcal{L}_{T}$ is a line-spread for a different choice of
$T$.

In proving this, in addition to the previous $\operatorname*{GF}(q)$-linear
mapping $\operatorname*{Tr}:$ $\operatorname*{GF}(q^{2})\rightarrow
\operatorname*{GF}(q),$ we will also make use of the $\operatorname*{GF}%
(2)$-linear mapping $\operatorname{tr}:$ $\operatorname*{GF}(q)\rightarrow
\operatorname*{GF}(2),$ where $\operatorname{tr}(\mu)\in\operatorname*{GF}(2)$
is the absolute trace of $\mu\in\operatorname*{GF}(2^{h})$:%
\begin{equation}
\operatorname{tr}(\mu)=\mu+\mu^{2}+\mu^{4}+\ldots+\mu^{2^{h-1}}.
\label{tr(mu)}%
\end{equation}
Observe that $\operatorname*{GF}(q)=\mathcal{T}_{0}\cup\mathcal{T}_{1}$ where
\begin{equation}
\mathcal{T}_{i}:=\{\mu\in\operatorname*{GF}(q)|\operatorname{tr}%
(\mu)=i\},\quad i\in\{0,1\}; \label{T0T1}%
\end{equation}
in particular $\mathcal{T}_{0}$ is the kernel of the linear mapping
$\operatorname{tr},$ and is a hyperplane in the $\operatorname*{GF}(2)$-space
$\operatorname*{GF}(q).$ It is easy to see also that $\mathcal{T}%
_{0}=\operatorname{im}F,$ where $F$ denotes the linear endomorphism of the
$\operatorname*{GF}(2)$-space $\operatorname*{GF}(q)$ defined by
$F(\lambda)=\lambda+\lambda^{2}.$ Consequently $\mathcal{T}_{1}$ consists of
those $\mu\in\operatorname*{GF}(q)$ \emph{not} expressible as $\mu
=\lambda+\lambda^{2}$ for any $\lambda\in\operatorname*{GF}(q).$

\begin{lemma}
\label{Lemma Tr = 1,1,...}There exists $\rho\in\operatorname*{GF}%
(q^{2})\setminus\operatorname*{GF}(q),~q=2^{h},$ such that%
\begin{align}
\text{(i) }h\text{ odd}  &  \text{:\qquad}\operatorname*{Tr}(\rho
)=1,~\operatorname*{Tr}(\rho^{2})=1,~\operatorname*{Tr}(\rho^{3}%
)=0;\label{Tr of powers of rho (i)}\\
\text{(ii) }h\text{ even}  &  \text{:\qquad}\operatorname*{Tr}(\rho
)=1,~\operatorname*{Tr}(\rho^{2})=1,~\operatorname*{Tr}(\rho^{3})=\mu,\text{
where }\operatorname{tr}(\mu)=1. \label{Tr of powers of rho (ii)}%
\end{align}
Moreover in (ii) we can choose $\rho$ so that $\mu$ is any pre-assigned
element of $\mathcal{T}_{1}.$
\end{lemma}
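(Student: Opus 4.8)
The plan is to parametrize candidate elements $\rho\in\operatorname{GF}(q^2)\setminus\operatorname{GF}(q)$ and translate the three trace conditions on $\rho,\rho^2,\rho^3$ into conditions on a single parameter in $\operatorname{GF}(q)$. Since $q=2^h$ and $\operatorname{GF}(q^2)/\operatorname{GF}(q)$ is a degree-$2$ extension, every $\rho\notin\operatorname{GF}(q)$ satisfies a quadratic $\rho^2=\rho s+t$ with $s\in\operatorname{GF}(q)^\times$ (one can even normalize, replacing $\rho$ by $\rho/s$, to the shape $\rho^2=\rho+d$ with $\operatorname{tr}(d)=1$, which is exactly the Artin--Schreier form guaranteeing irreducibility over $\operatorname{GF}(q)$). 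So I would first write $\rho^2=\rho+d$, compute $\operatorname*{Tr}(\rho)$, $\operatorname*{Tr}(\rho^2)$, $\operatorname*{Tr}(\rho^3)$ in terms of $d$ using $\operatorname*{Tr}(\beta)=\beta+\beta^q$, the Frobenius relation $\rho^q=\rho+1$ (which follows from $\rho^2+\rho=d\in\operatorname{GF}(q)$, so $\rho$ and $\rho+1$ are the two roots), and $\operatorname*{Tr}(\beta\gamma)$-type bilinearity.

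With $\rho^q=\rho+1$ one gets immediately $\operatorname*{Tr}(\rho)=\rho+\rho^q=1$, so the first condition in both (i) and (ii) is automatic for \emph{every} such $\rho$. Next, $\rho^2=\rho+d$ gives $\operatorname*{Tr}(\rho^2)=\operatorname*{Tr}(\rho)+\operatorname*{Tr}(d)=1+\operatorname*{Tr}(d)$; but $d\in\operatorname{GF}(q)$ so $\operatorname*{Tr}(d)=d+d^q=2d=0$ in characteristic $2$, hence $\operatorname*{Tr}(\rho^2)=1$ as well, again for free. The content therefore sits entirely in the third trace. Using $\rho^3=\rho\cdot\rho^2=\rho(\rho+d)=\rho^2+d\rho=\rho+d+d\rho=(1+d)\rho+d$, I compute
\[
\operatorname*{Tr}(\rho^3)=(1+d)\operatorname*{Tr}(\rho)+\operatorname*{Tr}(d)=(1+d)\cdot 1+0=1+d,
\]
an element of $\operatorname{GF}(q)$ that I will call $\mu:=1+d$. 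So $\operatorname*{Tr}(\rho^3)=\mu$ and, since the map $d\mapsto 1+d$ is a bijection of $\operatorname{GF}(q)$, the constraint $\operatorname{tr}(d)=1$ (needed for $x^2+x+d$ to be irreducible, i.e.\ for $\rho\notin\operatorname{GF}(q)$) translates to $\operatorname{tr}(\mu)=\operatorname{tr}(1)+\operatorname{tr}(d)=h\cdot 1+1\pmod 2$.

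This is exactly where the odd/even split in the lemma comes from, and it is the only genuinely delicate point. If $h$ is even then $\operatorname{tr}(1)=h\equiv 0$, so $\operatorname{tr}(\mu)=1$, giving case (ii); and as $d$ ranges over the hyperplane $\{\operatorname{tr}(d)=1\}$, $\mu=1+d$ ranges over all of $\mathcal T_1$, proving the final "any pre-assigned element of $\mathcal T_1$" clause. If $h$ is odd then $\operatorname{tr}(1)=1$, so $\operatorname{tr}(\mu)=0$, meaning $\mu\in\mathcal T_0=\operatorname{im}F$, so we may write $\mu=\lambda+\lambda^2$; to land on the normalization $\operatorname*{Tr}(\rho^3)=0$ asserted in (i) I would replace $\rho$ by $\rho+\lambda$ (with $\lambda\in\operatorname{GF}(q)$, so this stays outside $\operatorname{GF}(q)$ and keeps $\operatorname*{Tr}(\rho)=1$, $\operatorname*{Tr}(\rho^2)=1$ unchanged since $\operatorname*{Tr}(\lambda)=\operatorname*{Tr}(\lambda^2)=0$), and check that this shift changes $\operatorname*{Tr}(\rho^3)$ by exactly $\operatorname*{Tr}$ of terms that sum to $\lambda+\lambda^2=\mu$, cancelling it to $0$. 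The main obstacle, then, is not any single computation but bookkeeping the affine change $\rho\mapsto\rho+\lambda$ carefully enough to see that the cubic trace can be killed precisely when $h$ is odd, and only relocated within a coset of $\mathcal T_0$ when $h$ is even — which is why the even case must content itself with prescribing $\mu\in\mathcal T_1$ rather than $\mu=0$.
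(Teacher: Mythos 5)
Your proof is correct and follows essentially the same route as the paper's: normalising so that $\operatorname*{Tr}(\rho)=1$ (equivalently $\rho^{q}=\rho+1$, i.e.\ the Artin--Schreier form $\rho^{2}=\rho+d$), observing that $\operatorname*{Tr}(\rho^{2})=1$ comes for free, identifying $\operatorname*{Tr}(\rho^{3})=1+(\rho+\rho^{2})=1+d$ with $\operatorname{tr}(d)=1$, splitting on the parity of $\operatorname{tr}(1)=h\bmod 2$, and using the shift $\rho\mapsto\rho+\lambda$ (the paper's $\rho\mapsto\rho+\alpha$ with $\mu\mapsto\mu+\alpha+\alpha^{2}$) to reach $\mu=0$ in case (i) and any element of $\mathcal{T}_{1}$ in case (ii). The only cosmetic difference is that you package the key trace identity as the Artin--Schreier irreducibility criterion where the paper computes $\operatorname{tr}(\rho+\rho^{2})=\rho+\rho^{q}=1$ by a telescoping sum.
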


\begin{proof}
For any $\zeta\in\operatorname*{GF}(q^{2})\setminus\operatorname*{GF}(q)$ we
have $\ \operatorname*{Tr}(\zeta)\neq0.$ So, since $\operatorname*{Tr}%
(\alpha\zeta)=\alpha\operatorname*{Tr}(\zeta)$ for $\alpha\in
\operatorname*{GF}(q),$ $\zeta\in\operatorname*{GF}(q^{2}),$ then
$\rho=(\operatorname*{Tr}\zeta)^{-1}\zeta$ achieves $\operatorname*{Tr}%
(\rho)=1.$ It then follows that $\operatorname*{Tr}(\rho^{2})=\rho^{2}%
+\rho^{2q}=(\rho+\rho^{q})^{2}=1.$ It further follows that $\rho^{3q}=\rho
^{q}\rho^{2q}=(1+\rho)(1+\rho^{2}),$ whence%
\begin{equation}
\mu:=\operatorname*{Tr}(\rho^{3})=\rho^{3}+\rho^{3q}=1+\rho+\rho^{2}.
\label{mu = 1 + rho +}%
\end{equation}
Now from $\mu+1=\rho+\rho^{2}$ we obtain%
\begin{align}
\operatorname{tr}(\mu+1)  &  =(\rho+\rho^{2})+(\rho^{2}+\rho^{4})+\ldots
(\rho^{2^{h-1}}+\rho^{2^{h}})\nonumber\\
&  =\rho+\rho^{q}=\operatorname*{Tr}(\rho)=1. \label{tr(mu + 1)}%
\end{align}
Suppose first that $h$ is odd. Then $\operatorname{tr}(1)=1$ and so $\mu
\in\mathcal{T}_{0}.$ Now for any $\alpha\in\operatorname*{GF}(q)$ consider
$\rho^{\prime}:=\rho+\alpha.$ Then $\operatorname*{Tr}(\rho^{\prime})=1,$ and
hence $\operatorname*{Tr}((\rho^{\prime})^{2})=1.$ Further if $\mu^{\prime
}:=\operatorname*{Tr}((\rho^{\prime})^{3})$ then
\begin{equation}
\mu^{\prime}=\operatorname*{Tr}(\rho^{3})+\alpha\operatorname*{Tr}(\rho
^{2})+\alpha^{2}\operatorname*{Tr}(\rho)+\alpha^{3}\operatorname*{Tr}%
(1)=\mu+\alpha+\alpha^{2}, \label{mumualphaalpha}%
\end{equation}
whence $\mu^{\prime}=0$ for a suitable choice of $\alpha,$ thus achieving
(\ref{Tr of powers of rho (i)}).

If instead $h$ is even and so $\mu\in\mathcal{T}_{1},$ then we see from
(\ref{mumualphaalpha}) that we can achieve (\ref{Tr of powers of rho (ii)})
for any pre-assigned $\mathcal{\mu\in T}_{1}.$
\end{proof}

\begin{theorem}
\label{Thm Line-spread q even}If $V_{6}=V(6,q)$ where $q=2^{h}$ is even, then
if $h$ is odd the 3-form $T\in\operatorname{Alt}(\times^{3}V_{6})$ given by
the trivector%
\begin{equation}
t=f_{234}+f_{135}+f_{126}+f_{156}+f_{246}+f_{345} \label{t = 234 = ... + 345}%
\end{equation}
is such that $\mathcal{L}_{T}$ is a Desarguesian line-spread in
$\operatorname*{PG}(5,q).$ If $h$ is even then, for any $\mu\in
\operatorname*{GF}(q)$ satisfying $\operatorname{tr}(\mu)=1,$ the 3-form
$T\in\operatorname{Alt}(\times^{3}V_{6})$ given by the trivector%
\begin{equation}
t=f_{234}+f_{135}+f_{126}+f_{156}+f_{246}+f_{345}+\mu f_{456}
\label{t = 234 = ... + 345 + mu}%
\end{equation}
is such that $\mathcal{L}_{T}$ is a Desarguesian line-spread in
$\operatorname*{PG}(5,q).$
\end{theorem}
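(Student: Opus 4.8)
The plan is to mimic the structure of the proof of Theorem~\ref{Thm Line-spread q odd}, working with the representation of $V_6$ as $V(3,q^2)$ and the trilinear form $T(x,y,z)=\operatorname*{Tr}(\tau(x,y,z))$ of (\ref{T as Tr}). By Lemma~\ref{Lemma Tr = 1,1,...} we may choose $\rho\in\operatorname*{GF}(q^2)\setminus\operatorname*{GF}(q)$ so that, with $\beta=1$, the constants $c_i=\operatorname*{Tr}(\beta\rho^i)$ of (\ref{ci = Tr}) become $c_0=\operatorname*{Tr}(1)=0$ (since $q$ is even), $c_1=\operatorname*{Tr}(\rho)=1$, $c_2=\operatorname*{Tr}(\rho^2)=1$, and $c_3=\operatorname*{Tr}(\rho^3)$ equal to $0$ when $h$ is odd and to a prescribed $\mu\in\mathcal T_1$ when $h$ is even. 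Substituting these into (\ref{t = c0t0 + ...})--(\ref{t0 = ,t1 = ...}) and using $t_1=f_{234}-f_{135}+f_{126}$, $t_2=f_{156}-f_{246}+f_{345}$, $t_3=f_{456}$, and the fact that signs are irrelevant in characteristic $2$, gives exactly the trivectors (\ref{t = 234 = ... + 345}) and (\ref{t = 234 = ... + 345 + mu}). So the stated $T$ really is (up to the $\operatorname*{GL}(6,q)$-action implicit in the coordinate choice) the trace form attached to this particular $\rho$.

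Once $T$ is identified with a trace form, the rest follows exactly as in the odd case. First, since $\tau(a,\rho a,z)=0$ for all $z$ because $\tau$ is alternating and $a,\rho a$ are $\operatorname*{GF}(q^2)$-proportional, every one of the $q^4+q^2+1$ Desarguesian spread lines $\langle a,\rho a\rangle$ is $T$-singular; these cover all of $\operatorname*{PG}(5,q)$. Second, to see there are no further $T$-singular lines, suppose $\langle a,b\rangle$ is $T$-singular with $b\notin\langle a,\rho a\rangle$. Then $a$ and $b$ are $\operatorname*{GF}(q^2)$-linearly independent, so there is $A\in\operatorname{SL}(3,q^2)$ with $Aa=e_1$, $Ab=e_2$; invariance $\tau(Aa,Ab,Ax)=\tau(a,b,x)$ reduces us to showing $\operatorname*{Tr}(\tau(e_1,e_2,x))$ is not identically zero in $x\in V_6$. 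Taking $x=e_3$ gives $\operatorname*{Tr}(\tau(e_1,e_2,e_3))=\operatorname*{Tr}(\beta)=c_0$; but in characteristic $2$ this is $0$, so $e_3$ no longer works and one must instead take $x=e_4=\rho e_1,\ x=e_5=\rho e_2,\ x=e_6=\rho e_3$. Evaluating, $\tau(e_1,e_2,\rho e_3)=\rho\beta$, so it suffices that $\operatorname*{Tr}(\rho\beta)=\operatorname*{Tr}(\rho)=c_1\neq0$, which holds since $c_1=1$. Hence $T(a,b,\cdot)\neq0$, a contradiction, and $\mathcal L_T$ is precisely the Desarguesian spread.

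The only genuinely delicate point is the bookkeeping in the first paragraph: one must check that the linear substitution turning $V(3,q^2)$ into $V(6,q)$ via (\ref{e4e5e6})--(\ref{V3 as V6}), together with the choice $\beta=1$ and the Lemma's choice of $\rho$, really does produce the displayed coefficient pattern (\ref{cijk = c}), and that in characteristic $2$ one may drop all minus signs to arrive at (\ref{t = 234 = ... + 345}) and (\ref{t = 234 = ... + 345 + mu}) verbatim. I expect this to be the main obstacle only in the sense of being the step requiring care; conceptually it is routine, since (\ref{cijk = c})--(\ref{t0 = ,t1 = ...}) were already derived in full generality earlier in the section. Everything after that is a verbatim repeat of the argument for Theorem~\ref{Thm Line-spread q odd}, with the single modification that, because $\operatorname*{Tr}(1)=0$ when $q$ is even, the witness $x=e_3$ used in the odd case must be replaced by $x=e_6=\rho e_3$, and the non-vanishing then rests on $\operatorname*{Tr}(\rho)=1$ rather than on $\operatorname*{Tr}(1)\neq0$.
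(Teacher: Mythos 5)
Your proof is correct and follows the same route as the paper: identify $t$ with the trace form $\operatorname{Tr}(\tau(\cdot,\cdot,\cdot))$ of (\ref{T as Tr}) via Lemma \ref{Lemma Tr = 1,1,...} with $\beta=1$, and then repeat the spread argument from the proof of Theorem \ref{Thm Line-spread q odd}. In fact you supply a detail that the paper's one-line appeal to that earlier proof elides: since $\operatorname{Tr}(1)=0$ in characteristic $2$, the witness $x=e_{3}$ used in the odd-$q$ case gives $\operatorname{Tr}(\beta)=0$ and fails, and your replacement $x=e_{6}=\rho e_{3}$, with $\operatorname{Tr}(\rho\beta)=\operatorname{Tr}(\rho)=1\neq 0$, is exactly the right fix.
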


\begin{proof}
In (\ref{ci = Tr}) choose $\rho$ as in Lemma \ref{Lemma Tr = 1,1,...} and
choose $\beta=1,$ and so $c_{0}=\operatorname*{Tr}(1)=0.$ Then we obtain
(\ref{t = 234 = ... + 345}) and (\ref{t = 234 = ... + 345 + mu}) from
(\ref{t = c0t0 + ...}) (\ref{t0 = ,t1 = ...}). The rest of the proof is as in
the proof of Theorem \ref{Thm Line-spread q odd}.
\end{proof}

\begin{remark}
The canonical form (\ref{t = 234 = ... + 345}) was obtained previously in
\cite{ShawSpreads}, albeit only in the special case $q=2.$ In
\cite{ShawSpreads} two alternative canonical forms were also found, namely
$t^{\prime}$ and $t^{\prime\prime}$ as given by%
\begin{align}
t^{\prime}  &  =f_{156}+f_{246}+f_{345}+f_{123}+f_{456}\nonumber\\
t^{\prime\prime}  &  =f_{234}+f_{135}+f_{126}+f_{123}+f_{456}~. \label{t't''}%
\end{align}
\emph{These are also alternatives to (\ref{t = 234 = ... + 345}) for any}
$q=2^{h}$, $h$\emph{ odd.} One way to obtain these alternatives is by use of
the choice $\rho=\zeta^{(q-1)(q+1)/3}.$in (\ref{ci = Tr}). For if $h$ is odd
then $3|(q+1)$ and so $\zeta^{(q+1)/3}\in\operatorname*{GF}(q),$ whence
$\rho\notin\operatorname*{GF}(q).$ Since $\rho^{3}=1,$ we have
$\operatorname*{Tr}(\rho^{2})=\operatorname*{Tr}(\rho)$ and
$\operatorname*{Tr}(\rho^{3})=0.$ So in (\ref{ci = Tr}) the choices $\beta=1$,
$\beta=\rho,~\beta=\rho^{2}$ give rise respectively to the trivectors $t$ (as
in (\ref{t = 234 = ... + 345}))$,t^{\prime},t^{\prime\prime}.$
\end{remark}

\begin{remark}
For even $n$ one might wonder whether $\mathcal{L}_{T}$ can be a line spread
for fields other than finite fields. Indeed, suppose that $\mathbb{F}$ is
algebraically closed; is it possible that $\mathcal{L}_{T}$ is a spread? The
$T$'s for which this is the case can be shown to form a Zariski-open subset of
$\operatorname{Alt}(\times^{3}V_{n}),$ but this subset may be empty. Indeed,
we have performed Gr\"{o}bner basis calculations which show that for $n=4$ and
$n=6$ and for algebraically closed $\mathbb{F}$ of characteristic zero there
are no trilinear forms $T$ for which $\mathcal{L}_{T}$ is a spread. For $n=8$
our computational approach is not feasible, and new ideas will be needed.
\end{remark}

\section{Some other noteworthy alternating 3-forms \label{Sec some others}}

So far we have been looking at alternating 3-forms $T$ for which
$\mathcal{L}_{T}$ is as small a set as possible. In contrast we now give some
examples of interesting alternating 3-forms $T\in\operatorname{Alt}(\times
^{3}V_{n})$ which are non-degenerate yet for which some sizeable subspace
$V_{r}$ of $V_{n}$ is such that \emph{every line in }$\mathbb{P}V_{r}$\emph{
is }$T$\emph{-singular.} Let us term such a subspace $V_{r}$ \emph{totally
}$T$\emph{-singular.} Our first example is in dimension $n=6,$ where
\begin{equation}
t=f_{156}+f_{246}+f_{345} \label{t in dim 6}%
\end{equation}
is the trivector of a non-degenerate $T\in\operatorname{Alt}(\times^{3}V_{6})$
for which, for any field $\mathbb{F},$ the $3$-space $V_{3}=\prec e_{1}%
,e_{2},e_{3}\succ$ is totally $T$-singular. It is easy to check that no
subspace of dimension $>3$ is totally singular. A second example is in
dimension $n=10,$ where, writing $\mathtt{x}=10,$
\begin{equation}
t=f_{17\mathtt{x}}+f_{28\mathtt{x}}+f_{39\mathtt{x}}+f_{489}+f_{579}+f_{678}
\label{t in dim 10}%
\end{equation}
is the trivector of a non-degenerate $T\in\operatorname{Alt}(\times^{3}%
V_{10})$ for which the $6$-space $V_{6}=$ $\prec e_{1},~...~,e_{6}\succ$ is
totally $T$-singular.

\begin{theorem}
\label{Thm TT-S}If $n=\frac{1}{2}s(s+1),s>2,$ then, for any field
$\mathbb{F},$ there exists a single $\operatorname*{GL}(n,\mathbb{F})$-orbit,
say $\Omega,$ of non-degenerate alternating 3-forms $T$ on $V_{n}$ with the
property that there is a unique totally $T$-singular subspace $V_{r}$ of
$V_{n}$ of dimension $r=\frac{1}{2}s(s-1).$
\end{theorem}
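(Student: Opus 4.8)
The plan is to write down a single explicit representative $T_{0}$ of the putative orbit $\Omega$, then to show (a) that $T_{0}$ is non-degenerate and possesses exactly one totally $T_{0}$-singular subspace of dimension $r$, and (b) that \emph{every} non-degenerate $T$ admitting an $r$-dimensional totally $T$-singular subspace is $\operatorname*{GL}(n,\mathbb{F})$-equivalent to $T_{0}$; together with the $\operatorname*{GL}(n,\mathbb{F})$-invariance of the defining property this gives the theorem. For the model, take $W$ an $s$-dimensional $\mathbb{F}$-space and set $V_{n}=W\oplus\wedge^{2}W^{\ast}$, of dimension $s+\binom{s}{2}=\binom{s+1}{2}=n$, and $V_{r}:=\wedge^{2}W^{\ast}$, of dimension $\binom{s}{2}=r$. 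Define $T_{0}\in\operatorname{Alt}(\times^{3}V_{n})$ on $x_{i}=w_{i}+\varphi_{i}$ ($w_{i}\in W$, $\varphi_{i}\in\wedge^{2}W^{\ast}$) by
\[
T_{0}(x_{1},x_{2},x_{3})=\langle\varphi_{3}\mid w_{1}\wedge w_{2}\rangle+\langle\varphi_{1}\mid w_{2}\wedge w_{3}\rangle+\langle\varphi_{2}\mid w_{3}\wedge w_{1}\rangle .
\]
One checks at once that this is alternating, that its only nonzero ``type'' is $T_{0}(u,v,\varphi)=\langle\varphi\mid u\wedge v\rangle$ for $u,v\in W$, $\varphi\in\wedge^{2}W^{\ast}$, hence that $V_{r}=\wedge^{2}W^{\ast}$ is totally $T_{0}$-singular, and that $T_{0}$ is non-degenerate (if $w+\varphi\in\operatorname{rad}T_{0}$, pairing against $W\times W$ forces $\varphi=0$, and then pairing $w$ against $W\times\wedge^{2}W^{\ast}$ forces $w\wedge u=0$ for all $u$, so $w=0$, using $s\geq2$). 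Up to a change of basis, the trivectors (\ref{t in dim 6}) and (\ref{t in dim 10}) are just $T_{0}$ for $s=3$ and $s=4$.

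\textbf{Reduction of an arbitrary $T$ to the model.} Given $T$ non-degenerate with a totally $T$-singular $V_{r}$ of dimension $r$, I would pick a complement $U$, so $V_{n}=U\oplus V_{r}$ with $\dim U=s$. Because $V_{r}$ is totally $T$-singular the only possibly nonzero values of $T$ are $T(u_{1},u_{2},u_{3})$ ($u_{i}\in U$) and $T(u_{1},u_{2},a)$ ($u_{i}\in U$, $a\in V_{r}$), so $T$ is encoded by $T^{\prime}:=T|_{\wedge^{3}U}\in\operatorname{Alt}(\times^{3}U)$ together with $\beta\colon\wedge^{2}U\to V_{r}^{\ast}$, $\beta(u_{1}\wedge u_{2})(a)=T(u_{1},u_{2},a)$. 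A short computation of $\operatorname{rad}T$ shows that non-degeneracy of $T$ amounts exactly to $\beta$ being bijective (both sides having dimension $r$). Next, replacing $U$ by $U^{\prime}=\{u+\gamma(u):u\in U\}$ for a linear $\gamma\colon U\to V_{r}$ leaves $\beta$ unchanged but changes $T^{\prime}$ to $T^{\prime}+\delta\gamma$, where, writing $h:=\beta^{\ast}\circ\gamma\colon U\to\wedge^{2}U^{\ast}$,
\[
\delta\gamma(u_{1},u_{2},u_{3})=h(u_{1})(u_{2},u_{3})+h(u_{2})(u_{3},u_{1})+h(u_{3})(u_{1},u_{2}).
\]
On a decomposable $h=\xi\otimes\eta$ this equals $\xi\wedge\eta$; hence, since $\beta^{\ast}$ is an isomorphism and the multiplication $U^{\ast}\otimes\wedge^{2}U^{\ast}\to\wedge^{3}U^{\ast}$ is surjective, $\gamma\mapsto\delta\gamma$ is onto $\operatorname{Alt}(\times^{3}U)$. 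So the complement may be chosen with $T^{\prime}=0$, and then $\mathrm{id}_{U}\oplus\beta^{\ast}\colon U\oplus V_{r}\to U\oplus\wedge^{2}U^{\ast}$ carries $T$ to the model $T_{0}$ built on $W=U$; thus all such $T$ form a single $\operatorname*{GL}(n,\mathbb{F})$-orbit.

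\textbf{Uniqueness of the totally singular subspace.} Finally I would show that for $T_{0}$ the subspace $M:=\wedge^{2}W^{\ast}$ is the only totally $T_{0}$-singular subspace of dimension $r$ (indeed of any dimension $\geq r$). If $V^{\prime}$ is totally $T_{0}$-singular then for $a=u+\varphi,\ b=v+\psi$ in $V^{\prime}$ the condition $T_{0}(a,b,\chi)=0$ for $\chi\in\wedge^{2}W^{\ast}$ reads $\langle\chi\mid u\wedge v\rangle=0$, so $u\wedge v=0$; holding for every pair, this forces the image $\overline{V^{\prime}}$ of $V^{\prime}$ in $W$ to have dimension $\leq1$. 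Assume $\dim V^{\prime}\geq r$. If $\overline{V^{\prime}}=\langle u_{0}\rangle$ with $u_{0}\neq0$, then $N:=V^{\prime}\cap M$ has codimension $1$ in $V^{\prime}$, so $\dim N\geq r-1$, while the condition $T_{0}(\varphi,u_{0}+\varphi_{0},w)=0$ ($\varphi\in N$, $w\in W$) puts $N$ inside the kernel of the contraction $\iota_{u_{0}}\colon\wedge^{2}W^{\ast}\to W^{\ast}$, of dimension $\binom{s}{2}-(s-1)=\binom{s-1}{2}=r-(s-1)$; hence $r-1\leq r-(s-1)$, i.e. $s\leq2$, contradicting $s>2$. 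So $\overline{V^{\prime}}=0$, whence $V^{\prime}\subseteq M$ and $V^{\prime}=M$. This gives the uniqueness, and since ``being non-degenerate with a unique $r$-dimensional totally singular subspace'' is $\operatorname*{GL}(n,\mathbb{F})$-invariant and holds for $T_{0}$, the set $\Omega$ of such forms is exactly the orbit of $T_{0}$.

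\textbf{The main obstacle.} The radical computations and the change-of-complement bookkeeping are routine; the two genuinely load-bearing points are (i) that the complement can be normalized so that $T^{\prime}=0$, which hinges on the surjectivity of the Koszul map $\operatorname{Hom}(U,\wedge^{2}U^{\ast})\to\wedge^{3}U^{\ast}$ — valid over any field and in any characteristic because it is simply the wedge product on decomposables — and which is what collapses the a priori $\binom{s}{3}$-parameter freedom in $T^{\prime}$ to nothing; and (ii) the dimension count in the uniqueness step, where the hypothesis $s>2$ enters decisively. (For $s=2$ one has $n=3$, $r=1$, and every $1$-space is vacuously totally singular, so there is no uniqueness — which is exactly why $s\leq2$ is excluded.)
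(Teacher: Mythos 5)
Your proof is correct and complete. The paper itself offers no argument for Theorem \ref{Thm TT-S} beyond the citation of \cite[Section 4]{DraismaShaw}, and your construction --- the model $W\oplus\wedge^{2}W^{\ast}$ with the contraction form, the normalization of the complement via the surjectivity of the Koszul map $\operatorname{Hom}(U,\wedge^{2}U^{\ast})\rightarrow\wedge^{3}U^{\ast}$, and the dimension count that rules out any other totally singular subspace when $s>2$ --- is essentially the argument of that reference, consistent with the paper's own examples (\ref{t in dim 6}) and (\ref{t in dim 10}) for $s=3$ and $s=4$.
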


\begin{proof}
See \cite[Section 4]{DraismaShaw}.
\end{proof}

\begin{remark}
In dimension $n=15$ the subspace $V_{r}$ in the theorem is of dimension $10.$
By use of the crude inequality $|\operatorname*{GL}(n,q)|\ll q^{n^{2}},$ as in
Section \ref{Sec Intro}, we get the even cruder lower bound $q^{230}$ for the
number of $\operatorname*{GL}(15,q)$ orbits of alternating 3-forms $T$ on
$V(15,q).$ Clearly there is no possibility of studying all of these zillions
of orbits! But perhaps the orbit $\Omega$ does deserve further study?
\end{remark}

%

\small

\medskip

\noindent{\small Jan Draisma }

{\small Department of Mathematics and Computer Science }

{\small Technische Universiteit Eindhoven }

{\small P.O. Box 513, 5600 MB Eindhoven }

({\small also CWI Amsterdam) }

{\small The Netherlands}

{\small e-mail: }\texttt{j.draisma@tue.nl}

\medskip

\noindent{\small Ron \thinspace Shaw }

{\small Centre for Mathematics }

{\small University of Hull }

{\small Hull HU6 7RX }

{\small United Kingdom}

{\small e-mail: }\texttt{r.shaw@hull.ac.uk}

\end{document}